\newtheorem*{lemma}{Lemma}
\newtheorem*{proposition}{Proposition}
\theoremstyle{definition}
\newtheorem*{example}{Examples}
\theoremstyle{remark}
\numberwithin{equation}{section}
\newtheoremstyle{note}
  {3pt}
  {3pt}
  {}
  {}
  {\bf}
  {)}
  {.01em}
  {}
\theoremstyle{note}
\theoremstyle{notelist}
\def\hsp{\mspace{1mu}}
\newcommand{\Z}{\Bbb Z}
\newcommand{\N}{\Bbb N}
\newcommand{\Q}{\Bbb Q}
\newcommand{\C}{\Bbb C}
\newcommand{\R}{\Bbb R} 
\newcommand{\Zp}{\Bbb Z_p}
\newcommand{\IND}[2]{{[#1{\hsp:\hsp}#2]}}
\newcommand{\Gal}{\mathcal G}
\newcommand{\id}{\text{id}}
\newcommand{\Aut}{\text{Aut}}
\newcommand{\inv}{^{-1}}
\newcommand{\si}{\sigma}
\newcommand{\KG}{K\#G}
\newcommand{\PI}{PI}
\begin{document}

\title{Locally \PI~but not \PI~division rings of arbitrary GK-dimension}

\author{J.~C.~McConnell\ \  and \ \  A.~R.~Wadsworth}
\email{arwadsworth@ucsd.edu}





\begin{abstract}
We give examples of locally \PI~but not \PI~division rings of GK-dimension
$n$ for every positive integer $n$.
\end{abstract}

\maketitle
\vspace{-20pt}
\section*{Introduction} 

In this note we give a construction of division algebras which are locally \PI~but
not \PI~of GK-dimension $n$ for any positive integer $n$, with center a field $F$
which may be countable or uncountable, of any characteristic.  The example is
the quotient division ring $D = q(R)$ of a twisted group ring $R = K\#G$, where 
$K$ is a $\Zp$-extension field of $F$, and $G$ is a suitable free abelian subgroup 
of rank~$n$ of the Galois group $\Gal(K/F)$.  (Terminology will be defined below.)
$R$ can also be described as an $n$-fold iterated twisted Laurent polynomial 
ring over $K$, with center $Z(R) = F$.

In \cite [Prop.~4.1]{M}, McConnell gave an example of a simple Noetherian domain 
$R$ that is locally~PI but not PI and of GK-dimension $1$ over its center. That $R$
is a twisted group ring $\KG$ where $K$~is an infinite-degree cyclotomic  extension
of the rational numbers $\Q$ and $G$ is an infinite cyclic group of
automorphisms of~$K$. It was later 
observed that the quotient division ring $D = q(R)$ is also locally PI but not PI and 
still has GK-dimension $1$ over the center.  Some years later, in 1996, Zhang gave in 
\cite[Ex.~5.7]{Z} an example of a locally PI but not PI division algebra of 
GK-dimension~$2$ over its center.  Meanwhile, McConnell showed how 
to obtain GK-dimension $n$
examples as the quotient rings of tensor products of variants of his GK-$1$ ring 
$R$.  He sketched out his approach in a note written to Lance Small in 1997.
Later, in 2012, Small asked his colleague Wadsworth to look over McConnell's note 
to clarify the infinite Galois theory being used.  He did so, and in May, 2012 wrote 
a note \lq\lq McConnell's Example" on the example with complete proofs.  The present 
article is based on that 2012 note.

In 2018 the authors learned of the preprint \cite{DBH} with gives a different 
construction of locally PI but not PI division algebras of GK-dimension 
any positive integer $n$.  We felt it would be worthwhile to make 
McConell's construction available to the mathematical community, and that 
has led to the present article.

\section{$\Bbb Z_p$ field extensions}

Let $p$ be any prime number, and let $\Zp$ denote the additive group of $p$-adic integers.
Thus, $\Zp = \varprojlim_{n\in \Bbb N}\Z/p^n\Z$.  The usual topology on $\Zp$ as a complete 
metric space
coincides with its topology as a profinite group.  In this section we recall for the reader's 
convenience some well-known facts about infinite degree Galois field extensions with 
Galois group $\Zp$. For general background on the Galois theory of infinite-degree algebraic field extensions and the topological structure of the associated Galois groups, see, e.g., \cite[\S 1.6]{G}. 

Let $F\subseteq K$ be fields.  We say that $K$ is a {\it $\Zp$-extension} of $F$ if there 
is a chain of intermediate fields $F \subseteq L_1 \subseteq L_2 \subseteq \ldots \subseteq K$
such that $K = \bigcup_{m=1}^\infty L_m$ and each $L_m$ is Galois over $F$ with 
$\IND {L_m}F = p^m$ and cyclic Galois group $\Gal (L_m/F) \cong \Z/p^m\Z$.
Note the following properties of a $\Zp$-extension:
\begin{enumerate}
\item[(i)]
$K$ is algebraic over $F$ with $\IND KF = \infty$.
\smallskip
\item[(ii)]
If $L$ is a field with $F\subsetneqq L \subsetneqq K$, then 
$L = L_m$ for some $m$.  
\smallskip
\item[(iii)]
$K$ is Galois over $F$ (since it is a direct limit of finite-degree 
Galois extensions of $F$). Moreover,  $\Gal (K/F) \cong \Zp$,
a topological group isomorphism.
Indeed, since $K = \bigcup _{m=1} ^\infty L_m$, we have
$$
\Gal(K/F) \, = \, \varprojlim_{m \in \N}\Gal(L_m/F) \,\cong \,
\varprojlim_{m\in \N} \Z/p^m \Z \, = \, \Zp.
$$
\item[(iv)] 
The fixed field $K^{\Gal(K/F)} = F$, since $K$ is Galois over $F$.
\smallskip
\item[(v)]
If $\tau \in \Gal(K/F)$, then its restriction $\tau|_{L_m}$ lies in  
$\Gal(L_m/F)$ for each $m$, since $L_m$ is Galois over $F$.
\item[(vi)]
The group $\Gal(K/F)$ is abelian, 
uncountable, and torsion-free.  This is immediate from (iii) above.
(Here is a direct proof that $\Gal(K/F)$ is torsion-free: 
If $\tau \in \Gal(K/F)$ with $\tau \ne \id_K$, then there is an $m$ with $\tau|_{L_m}
\ne \id_{L_m}$. Say $\tau|_{L_m}$ has order $p^j$ in $\Gal(L_m/F)$, with 
$1\le j\le m$.  Then, for the fixed field $L_m^{\,\,\,\tau}$ we have 
$\IND{L_m}{L_m^{\,\,\,\tau}} = p^j$, 
so $L_m^{\,\,\,\tau} = L_{m-j}$ (using (ii) above).  For any $k \ge m$, we have $\tau|_{L_k}$ 
fixes $L_{m-j}$ but not $L_{m-j+1}$.  Hence, $L_k^{\,\,\,\tau} = L_{m-j}$, so $\tau|_{L_k}$
has order $\IND {L_k}{L_k^{\,\,\,\tau}} = p^{k-(m-j)}$, which tends to infinity as $k \to \infty$.  Thus, 
$\tau$ has infinite order.)
\smallskip 
\item[(vii)]
$\Gal(K/F)$ is topologically cyclic.  That is, there is $\sigma \in \Gal(K/F)$ with 
fixed field $K^\sigma = F$. For example, take any nonidentity 
$\rho \in \Gal(L_1/F)$,
and let $\sigma$ be any extension of $\rho $ to $K$ (which exists as $K$ is normal over $F$).
Then $K^\sigma$ doesn't contain $L_1$, so we must have $K^\sigma = F$ by 
(ii)~above.  While $\langle \sigma\rangle \ne \Gal(K/F)$ (as $\Gal(K/F)$ is uncountable),
its closure $\overline{\langle \sigma\rangle}$ is all of~$\Gal(K/F)$.
\end{enumerate}

\begin{example}\hfill
\begin{enumerate}
\item[(i)]
Let $F$ be a field with $F \ne F^p$ such that $F$   contains $p^m$ different
$p^m$-th roots of unity
for every positive integer $m$. Take any $a\in F\setminus F^p$, 
 let $L_m = F(\sqrt[p^m]a)$ in some algebraic closure of $F$, and let 
 $K = \bigcup_{m=1}^\infty L_m$.  
 We have $F \subseteq L_1 \subseteq L_2\subseteq \ldots \subseteq K$, and by 
 Kummer theory each~$L_m$ is Galois over $F$ with $\Gal(L_m/F) \cong \Z/p^m\Z$
 (since $aF^{*p^m}$ has order $p^m$ in $F^*\!/F^{*p^m}$).
 More specifically, let $k$ be any field of characteristic not $p$ such that $k$ contains all
 $p^m$-th roots of unity for all $m$.  ($k$ could be countable or uncountable.) Let 
 $F = k(x)$, where $x$ is transcendental over $k$,  then $x\notin F^p$, so we could take 
 $K = \bigcup_{m=1}^\infty
 F(\sqrt[p^m]x)$.
 \smallskip
\item[(ii)]
Let $F$ be any finite field.  In an algebraic closure $\overline F$ of $F$ there is a unique 
extension field~$L_m$ of $F$
with $\IND {L_m}F = p^m$, and $L_m$  is cyclic Galois over $F$.  Moreover, 
$L_1 \subseteq L_2\subseteq \ldots\,\,$.  Let $K = \bigcup _{m=1}^\infty L_m$.  
Then, $K$ is a $\Zp$-extension of $F$.
\smallskip
\item[(iii)]
Suppose $K$ is a $\Zp$-extension of $F$.  If $E$ is a purely transcendental field 
extension of $F$, then the field 
$K \otimes_F E$  is a $\Zp$-extension of $E$.
\smallskip
\item[(iv)]
Let $F = \Q$, the rational numbers, and let $p$ be any odd prime number.  For any 
$m\in \N$, let 
$\Q_{p^m}$ be the $p^m$-th cyclotomic extension of $\Q$, i.e., 
$\Q_{p^m} = \Q(\omega_{p^m})$, where $\omega_{p^m}$ is a primitive $p^m$-th
root of unity in $\C$.  Then, $\IND {\Q_{p^m}}\Q = \varphi (p^m)= p^{m-1}(p-1)$
and $\Q_{p^m}$ is Galois over~$\Q$ with $\Gal(\Q_{p^m}\!/\Q )\cong (\Z/p^m\Z)^*$, the 
group of units of the ring $\Z/p^m\Z$; this is a cyclic group, as $p$ is odd.
Since $\Gal(\Q_{p^m}\!/\Q) \cong \Z/(p-1)\Z\times \Z/p^{m-1}\Z$, the field
$\Q_{p^m}$ has unique subfields $E_m$ and $N_m$ with 
$\IND{E_m}\Q= p-1$ and $\IND{N_m}\Q = p^{m-1}$. (In fact, $E_m = \Q_p$, 
and $\Q_{p^m}\cong N_m \otimes_\Q \Q_p$, for each $m$.)
Let $L_m = N_{m+1}$ for all $m$.  Then, $L_m \subseteq L_{m+1}$, since 
$\Q_{p^{m+2}}$~has a unique subfield of degree $p^m$ over $\Q$.  
Thus, $K = \bigcup _{m=1}^\infty L_m$ is a $\Zp$-extension of~$\Q$.  (For
$p=2$, and $m\ge 2$, one can show that each $\Q_{2^m}\cap \R$ is a cyclic Galois extension 
of $\Q$ of degree $2^{m-2}$; hence 
$\big(\bigcup _{m = 1}^\infty \Q_{2^m}\big) \cap \R$
is a $\Z_2$-extension of $\Q$. )
\end{enumerate}
\end{example}

\section {Twisted group rings}

Let $K$ be a field, let $G$ be an abelian group, and let 
$\psi\colon G \to \Aut(K)$ be a group homomorphism.  Then we can form the 
twisted group ring $K\#G$,
$$
\textstyle
K\#G \, = \, \big\{ \sum \limits_{g\in G}c_g\, g\mid \text{each $c_g\in K$ and 
almost all $c_g = 0$}\big\}, 
$$
with addition given by $\sum c_g\,g + \sum d_g\,g = \sum (c_g+d_g)\,g$ and multiplication 
determined by 
$$
(c\,g)\,(d\,h) \,=\, (c\,\psi(g) (d))\,gh.
$$ 
We are interested here only in the case where
$G$ is a free abelian group of finite rank $n\ge 1$.  Then, $K\#G$ 
can be viewed as an $n$-fold iterated
twisted Laurent polynomial ring,
$$
K\#G\, = \, K[x_1, x_1\inv, \sigma_1; \,\ldots\,; x_n, x_n\inv ,\sigma_n],
$$
where $\{x_1, \ldots, x_n\}$ is a base of $G$ as a free $\Z$-module and each 
$\sigma_i$ is given by $\psi(x_i)$ on $K$, then extended to 
$K[x_1, x_1\inv, \sigma_1; \,\ldots\,; x_{i-1}, x_{i-1}\inv ,\sigma_{i-1}]$ 
by setting $\si_i(x_j) = x_j$ for all $j < i$.  Thus, $K\#G$ is a left and right Noetherian 
Ore domain (see, e.g., \cite[Th.~4.5, p.~21]{MR}).  Note also that $K\#G$ has a natural
grading indexed by $G$:
$$
\textstyle
K\#G \, = \,\bigoplus\limits _{g\in G}(K\#G)_g\quad\text{where} \quad (K\#G) _g = Kg. 
$$
Indeed, $K\#G$ is a \lq\lq graded division ring," i.e., every nonzero homogeneous 
element is a unit.  Furthermore, every unit of $\KG$ is homogeneous.  (To see this,
choose some total ordering on $G$ to make it an ordered abelian group.  Then 
observe that if $r,s$ are any inhomogeneous elements of $\KG$, then $rs$ is also
inhomogeneous, since its lowest-degree term is the product of the lowest-degree
terms of $r$ and $s$, and likewise for the highest-degree term.)  Thus,
for the group of units,
\begin{equation*}
\textstyle
(\KG)^* \, = \, \bigcup\limits_{g\in G} K^*g.
\tag{$*$} 
\end{equation*}
If $\psi(G) = \{ \id_K\}$, then $K\#G$ is  the (untwisted) group 
ring $K[G]$, which is commutative.  Otherwise, $K\#G$ is noncommutative.

\begin{lemma}
Assume that $G$ is a free abelian group of finite rank.
\begin{enumerate}
\item[(i)]
$Z(K\#G) = E[H]$, where $E = K^{\psi(G)}$, the fixed field of $K$ under the action of
$\psi(G)$,  and $H = \ker(\psi)$.
\smallskip
\item [(ii)]
If $\psi $ is injective, then $Z(K\#G) = K^{\psi(G)}$.
\smallskip
\item[(iii)] 
If $|\psi(G)| = k<\infty$, then $K\#G$ is a free $Z(K\#G)$-module of rank $k^2$.
\item[(iv)]
If $\psi$ is injective, then $\KG$ is a simple ring.
\end{enumerate}
\end{lemma}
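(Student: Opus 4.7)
The plan is to handle the four parts in order, since (i) does the structural work that (ii) and (iii) specialize, while (iv) is independent and uses a different idea.

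For (i), I would write a general element as $r = \sum_{g\in G} c_g\,g$ and test centrality first against elements of $K$, then against elements of $G$. Commuting with an arbitrary $d \in K$ gives $c_g\,\psi(g)(d) = d\,c_g$ for every $g$ and every $d$, which forces every $g$ in the support of $r$ to satisfy $\psi(g) = \id_K$, i.e., $g \in H$. This reduces $r$ to a sum $\sum_{h\in H} c_h h$ whose internal multiplication is untwisted because $\psi(h)=\id_K$ for $h\in H$. Commuting with an arbitrary $g' \in G$, using that $G$ is abelian so $g'h = hg'$ in $G$, then forces $\psi(g')(c_h) = c_h$ for every $g'$, i.e., $c_h \in K^{\psi(G)} = E$. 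This identifies $Z(\KG)$ with $E[H]$. Part (ii) is then immediate, since injectivity of $\psi$ means $H = 0$ and so $E[H] = E$.

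For (iii), the first step is to invoke Artin's theorem on fixed fields: the finite group $\psi(G)$ of order $k$ acts faithfully on $K$, so $[K:E] = k$; simultaneously $[G:H] = |\psi(G)| = k$. I would then pick an $E$-basis $\alpha_1,\ldots,\alpha_k$ of $K$ and a transversal $g_1,\ldots,g_k$ of $H$ in $G$, and claim that $\{\alpha_i g_j\}$ is a free right $E[H]$-basis of $\KG$. Spanning is handled by writing each $g\in G$ uniquely as $g_j h$ with $h\in H$, expanding the $K$-coefficient along the $\alpha_i$, and using the key identity $g_j e = e g_j$ in $\KG$ for $e\in E$ (which holds because $\psi(g_j)$ fixes $E$). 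Linear independence follows by regrouping any putative zero relation by the group element $g_j h \in G$ and using $E$-linear independence of the $\alpha_i$ term by term. This yields rank $k^2$.

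For (iv), I would use a minimal-support argument. Given a nonzero two-sided ideal $I$, choose a nonzero $r = \sum c_g\,g \in I$ whose support $S$ has minimal cardinality. Multiplying on the right by $c_{g_0}^{-1}g_0^{-1}$ for some $g_0 \in S$, which is a unit of $\KG$ by $(*)$, I may assume $e \in S$ with coefficient $1$. If $|S| \ge 2$, pick $g_1 \ne e$ in $S$; injectivity of $\psi$ gives $\psi(g_1)\ne \id_K$, so there is $d \in K$ with $\psi(g_1)(d) \ne d$. Then $dr - rd \in I$, its $e$-coefficient vanishes (since $d - \psi(e)(d) = 0$), its $g_1$-coefficient equals $c_{g_1}(d-\psi(g_1)(d))\ne 0$, and its support is contained in $S\setminus\{e\}$, contradicting minimality. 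Hence $|S|=1$, so $r$ is a homogeneous unit by $(*)$, and $I = \KG$.

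The main obstacle I anticipate is in (iii): bookkeeping the side on which $E[H]$ acts and verifying that the twist genuinely becomes trivial on the $E$-coefficients, so that the module action on the proposed $k^2$-element basis is both well-defined and free. The other parts are essentially formal manipulations once the correct elements to test against have been chosen.
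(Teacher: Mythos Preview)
Your arguments for (i)--(iii) are correct and essentially match the paper's: the paper phrases (i) via the observation that the center is a graded subring (so it suffices to test when a homogeneous element $c\,g$ is central), but this amounts to exactly the two commutator tests you perform against $K$ and against $G$; part (iii) in the paper is the same basis $\{\alpha_i g_j\}$ with the details left to the reader.

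For (iv) your approach is correct but genuinely different from the paper's. The paper argues by induction on the rank of $G$, writing $\KG$ as an iterated twisted Laurent polynomial ring $R_i = R_{i-1}[x_i,x_i^{-1};\sigma_i]$ and invoking a standard simplicity criterion (from \cite{MR}): $R_i$ is simple provided $R_{i-1}$ is simple and no power of $\sigma_i$ is inner on $R_{i-1}$. The non-inner condition is checked using the description $(*)$ of the units together with injectivity of $\psi$. Your minimal-support/commutator-shrinking argument is more elementary and self-contained---it avoids the external citation and the iterated skew-polynomial machinery, and it works directly for any abelian $G$ (not just free of finite rank). The paper's route, on the other hand, ties the result to the general structure theory of skew Laurent extensions, which is the viewpoint used elsewhere in the paper. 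One small gloss in your normalization: right-multiplying by $c_{g_0}^{-1}g_0^{-1}$ puts $e$ in the support but the resulting $e$-coefficient is $c_{g_0}\psi(g_0)(c_{g_0}^{-1})$, not necessarily $1$; a further left multiplication by a scalar fixes this (or simply note that the coefficient-$1$ normalization is unnecessary, since the $e$-term of $dr-rd$ vanishes regardless).
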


\begin{proof}
(i) Since $G$ is abelian, every homogeneous component of a central element of 
$K\#G$  is also central.  Hence, $Z(K\#G)$ is a graded subring of $K\#G$. If a
homogeneous element $c\,g$ of $\KG$ is central ($c\in K$, $g\in G$), then $c$ 
commutes with all $h\in G$, so $c\in K^{\psi(G)}=E$; also, $g$ commutes with all $d\in K$,
so $g\in \ker(\psi)$.  Thus, $Z(\KG)\subseteq E\#H = E[H]$, and the reverse inclusion 
is clear.

(ii) is immediate from (i).

(iii) 
Let $E = K^{\psi(G)}$. 
If $|\psi(G)| = k<\infty$, then ${|G:H| = |G:\ker(\psi)| = k}$ and ${\IND KE  
= |\psi(G)| = k}$.  Let $\{c_1, \ldots, c_k\}$ be a base of $K$ as an $E$-vector space, and let
$g_1, \ldots, g_k\in G$ be a set of representatives for the cosets of $H$ in $G$.  Then,
$\{c_i\,g_j\}_{i=1\ j=1}^{ \ \ \,\, k\ \ \ \,\, k}$ is a base of $\KG$ as a free $E[H]$-module.

(iv) Let $\{x_1, \ldots, x_n\}$ be a base of the free
abelian group $G$.  Take subgroups $G_0 =\{\id_K\}$, ${G_1= \langle x_1\rangle}$, \ldots, 
$G_i = \langle x_1,\ldots,  x_i\rangle$, \ldots, $G_n = G$, and let 
$R_i = K\#G_i$.  So $R_0 = K$ and ${R_i = R_{i-1}[x_i, x_i\inv;\si_i]}$ for $i = 1,2, \ldots, n$
where the automorphism $\si_i$ of $R_{i-1}$ is given by $\psi(x_i)$ on $K$ and $\si_i(x_j) = x_j$ for~$j<i$.
So, $R_n = \KG$.   Of course,
$R_0 = K$ is a simple ring. For $i\ge 1$, if $R_{i-1}$ is simple, then its twisted Laurent 
polynomial ring $R_i$ is simple by 
\cite[Th.~1.8.5, p.~35]{MR}, since no power of $\si_i$ is an inner automorphism of 
$R_{i-1}$. (Since $R_{i-1}^* = \bigcup_{g\in G_{i-1}}K^*g$ by ($*$) above, every  inner automophism of $R_{i-1}$ acts on $K$ by an element of $\psi(G_{i-1})$, while 
$\si_i$ acts on $K$ by $\psi(x_i)$; no power of $\psi(x_i)$ lies in $\psi(G_{i-1})$, as 
$\psi$ is injective.) Hence, by induction, $\KG = R_n$ is simple.
\end{proof}

\section{The example}

Let $p$ be any prime number, and let $F\subseteq K$ be any $\Zp$-extension of fields, as
described in \S 1.   Take any $\si_1\in \Gal(K/F)$ such that $F = K^{\si_1}$
(see (vii) in \S 1).  Now fix any positive integer $n$, and take any $\si_2, \ldots, \si_n\in \Gal(K/F)$ 
such that $\si_1, \ldots, \si_n$ are $\Z$-independent in $\Gal(K/F)$.  This is possible 
since $\Gal(K/F)$ is an uncountable torsion-free abelian group. Let 
$G = \langle \si_1, \ldots, \si_n\rangle \subseteq \Gal(K/F)$, so $G \cong \Z^n$, and let 
$\psi\colon G \hookrightarrow \Gal(K/F) $ be the inclusion map. Let $R = \KG$, and let $D$ be its 
quotient division ring, $D = q(R)$.  

\begin{proposition}
$D$ has center $Z(D) = Z(R) = F$.  Moreover, $D$ is locally \PI~but not \PI, and 
$D$~has GK-dimension $n$ as an $F$-algebra.
\end{proposition}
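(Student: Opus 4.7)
The plan is to establish the four assertions—$Z(D) = Z(R) = F$, $D$ is not \PI, $D$ is locally \PI, and $\text{GK-dim}_F(D) = n$—in turn, using the Lemma of \S 2, Posner--Formanek, and a Malcev--Neumann embedding as the main tools. By Lemma~(ii), $Z(R) = K^G$; since $\si_1 \in G$ has $K^{\si_1} = F$, the chain $F \subseteq K^G \subseteq K^{\si_1} = F$ collapses to $Z(R) = F$. For $Z(D)$, I would pick any total order on $G \cong \Z^n$ and form the Malcev--Neumann twisted series ring $\widehat R := K((G;\psi))$ of formal sums $\sum_{g \in G} c_g g$ with well-ordered support. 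It is a division ring containing $R$, hence also contains $D = q(R)$. A direct coefficient comparison shows any $z = \sum c_g g \in \widehat R$ commuting with all $d \in K$ satisfies $c_g(d - \psi(g)(d)) = 0$ for every $d$, so $\psi(g) = \id_K$ for each $g$ in the support; by injectivity of $\psi$ this forces $g = e$, whence $z \in K$. Imposing also commutativity with $G$ gives $z \in K^G = F$. Hence $Z(D) \subseteq F = Z(R) \subseteq Z(D)$.

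If $D$ were \PI, so would its subring $R$ be; by Posner--Formanek $D = q(R)$ would then be finite-dimensional over $Z(D) = F$, contradicting $\IND K F = \infty$ (since $K \subseteq D$). For the local \PI~property, a finitely generated $F$-subalgebra $A \subseteq D$ has finitely many generators $s_i = a_i b_i^{-1}$ with $a_i, b_i \in R$; these involve only finitely many elements of $G$, generating some $G_0 \cong \Z^{n_0}$ with $n_0 \le n$, and finitely many coefficients in $K$, all lying in some $L_{m_0}$ (which is $G_0$-stable since it is Galois over $F$). Hence $A \subseteq D_0 := q(L_{m_0}\#G_0)$. The action of $G_0$ on $L_{m_0}$ factors through the finite group $\Gal(L_{m_0}/F)$, so Lemma~(iii) gives that $L_{m_0}\#G_0$ is finite over its center, hence \PI; therefore $D_0$, and with it $A$, is \PI.

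For the GK-dimension, the lower bound comes from the commutative subalgebra $F[x_1, x_1^{-1}, \ldots, x_n, x_n^{-1}] \subseteq R$ (the $x_i$ commute since $G$ is abelian and $F \subseteq Z(R)$), which is an ordinary Laurent polynomial ring of GK-dim $n$. For the upper bound, with $A \subseteq D_0$ and $B := L_{m_0}\#G_0$, Posner's theorem applied to the prime \PI~ring $B$ lets us write each generator of $A$ as $bs^{-1}$ with $s \in Z(B)$, so $A \subseteq B[S^{-1}]$ for some finitely generated multiplicative $S \subseteq Z(B)$. By Lemma~(i) and (iii), $B$ is finite over $Z(B) = L_{m_0}^H[N]$, which is a Laurent polynomial ring in $n_0$ variables over a finite extension of $F$; hence $\text{GK-dim}_F(B[S^{-1}]) \le n_0 \le n$, giving $\text{GK-dim}_F(A) \le n$, and taking suprema yields $\text{GK-dim}_F(D) = n$.

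The only genuinely delicate point is the equality $Z(D) = Z(R)$, since $D$ itself carries no useful $G$-grading; the Malcev--Neumann embedding is the cleanest workaround. The rest is bookkeeping around the central observation that every finitely generated subalgebra of $D$ sits inside a $D_0$ built from only finitely many elements of $G$ and a single finite level $L_{m_0}$ of the $\Zp$-tower.
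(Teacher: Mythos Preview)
Your argument is correct, but it diverges from the paper's in several places, and it is worth seeing how the paper streamlines things.  The paper works with a single nested exhaustion $S_k = L_k\#G$ (keeping the \emph{full} group $G$ and only truncating the field), so that $R = \bigcup_k S_k$ and $D = \bigcup_k q(S_k)$.  Because $\si_1|_{L_k}$ already generates $\Gal(L_k/F)$, the restriction $G \to \Gal(L_k/F)$ is surjective with kernel $H_k$ of rank~$n$; hence $Z(S_k)=F[H_k]$ is a Laurent polynomial ring in exactly $n$ variables, $S_k$ is free of rank $p^{2k}$ over it, and $q(S_k)$ is a division algebra of PI-degree $p^k$ and GK-dimension exactly $n$ over $F$.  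This one filtration then delivers locally PI, not PI (PI-degrees $\to\infty$), and GK-dimension $n$ simultaneously, with no need for your separate lower-bound via $F[x_1^{\pm1},\dots,x_n^{\pm1}]$ or the Posner--Formanek step.  For $Z(D)=F$, the paper avoids the Malcev--Neumann embedding entirely: Lemma~(iv) shows $R$ is simple, and then the denominator ideal of any $z\in Z(D)$ is all of $R$, forcing $z\in Z(R)=F$.  Your approach is perfectly valid---restricting to a subgroup $G_0$ and invoking Malcev--Neumann series both work---but it trades the paper's single unifying device for several independent (and heavier) tools.
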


\begin{proof}
For any positive integer $k$, let $L_k$ be the field with ${F \subseteq L_k \subseteq K}$ and 
$\IND{L_k}F = p^k$.  Since $L_k$~is Galois over~$F$, the $\si_i$ restrict to automorphisms
of $L_k$, so we can view the twisted group ring ${S_k = L_k\#G}$ as a subring of $R$.
Then $S_1\subseteq S_2 \subseteq \ldots$ and $R = \bigcup_{k=1}^\infty S_k$.  
Hence, ${D = q(R) = \bigcup_{k=1}^\infty q(S_k)}$.  

Consider $S_k$.  Since $K^{\si_1} = F$, we also have $L_k^{\si_1} = F$.  Hence,
$\psi_k\colon G \to \Gal(L_k/F)$  (given by ${\tau \mapsto \tau|_{L_k}}$) is surjective.
Let $H_k= \ker(\psi_k)$, which is a subgroup of $G$ with ${|G:H_k| = |\Gal(L_k/F)| = p^k}$.  
So, like $G$, the subgroup $H_k$ is free abelian of rank $n$.  By the Lemma,
$Z(S_k)= F[H_k]$, which is a Laurent polynomial ring in $n$ variables over $F$.
Therefore, $q(Z(S_k))$ is a rational function of transcendence degree, so GK-dimension,
$n$ over $F$.  By the Lemma, $S_k$ is a free $Z(S_k)$-module of rank~$p^{2k}$. 
Hence, the division ring $q(S_k) = S_k\otimes_{Z(S_k)}q(Z(S_k))$ has dimension 
$p^{2k}$ over its center $q(Z(S_k))$.  Thus, $q(S_k)$ has \PI-degree $p^k$
(see \cite[13.3.6, p.~455]{MR}).  Also, 
$q(S_k)$ has GK-dimension $n$ over $F$, since it is finite-dimensional over $q(Z(S_k))$,
which has GK-dimension $n$ over $F$.

Since $D = \bigcup_{k=1}^\infty q(S_k)$, a nested union, and each $q(S_k)$ has
GK-dimension $n$ over $F$, $D$ also has GK-dimension $n$ over $F$.

Every finitely-generated $F$-subalgebra or finitely-generated division subalgebra $T$ of $D$ lies in some 
$q(S_k)$.  So, $T$~is~\PI.  Hence, $D$ is locally \PI.  But since the \PI~degree of 
$q(S_k)$ tends to infinity with $k$, $D$ cannot  be \PI\   (see, e.g., 
\cite[\S 13.3, pp.~454--456]{MR}).

By the Lemma, $Z(R) = K^{\psi(G)} = F$.  The Lemma also shows that $R$ is a
simple ring.  Therefore, $Z(D) = Z(R)$, as one can see by considering the denominator 
ideal of any element of $Z(D)$ (as in \cite[Prop. 2.1.16(viii), p.~48]{MR}).
\end{proof}

\end{document}